\documentclass{article}
\usepackage[margin=1.95cm]{geometry}
\usepackage[hidelinks]{hyperref}
\usepackage{amsmath,amsthm,amsfonts,amssymb,amscd,mathrsfs}
\usepackage[shortlabels]{enumitem}
\usepackage[T1]{fontenc}
\usepackage{xcolor}

\newtheorem{theorem}{Theorem}[section]

\newtheorem{proposition}[theorem]{Proposition}

\newtheorem{example}[theorem]{Example}
\theoremstyle{definition}
\newtheorem{assumption}[theorem]{Assumption}
\theoremstyle{remark}
\newtheorem{remark}[theorem]{Remark}

\newcommand{\E}{{\mathbb E}}

\title{$L^p_\mu$-Normed Spaces of Bounded Multilinear Operators}

\author{ Filippo de Feo\footnote{Institut für Mathematik, Technische Universität Berlin. Email: defeo@math.tu-berlin.de}}
\begin{document}

\maketitle

\begin{abstract}
We study the spaces of bounded multilinear operators $\mathcal L^i(X,Y)$ endowed with $L^p_\mu$-norms, where    $X,Y$ are normed spaces and $\mu$ is a measure with  $p$-moments. These are norms  without taking a quotient, under suitable support conditions.  {The topology induced coincides with the strong operator and compact-open topologies on operator-norm-bounded subsets.}  Furthermore, we identify the  completion of these spaces with spaces of Hilbert--Schmidt operators on the  reproducing kernel Hilbert space (RKHS) for $p=2$; with iterated $\gamma$-radonifying multilinear operators on the  RKHS  in the Gaussian case, for general $p$.
\end{abstract}

\tableofcontents

\section{Introduction}\label{sec:intro}
\paragraph{Contributions.}
Throughout the paper, $X,Y\neq\{0\}$ are normed spaces and $i\geq1$. We impose the following standing assumption unless otherwise stated.

\begin{assumption}[Mixed moment condition]\label{ass:moments}
Let $1\leq p<\infty$ and $\mu$ be a positive Radon measure on $X^i$ satisfying $\int_{X^i}\prod_{t=1}^i|h_t|_X^p\,d\mu(h_1,\ldots,h_i)<\infty$.
\end{assumption}
 We consider spaces of bounded multilinear operators $\mathcal L^i(X,Y)$ equipped with the following $L^p_\mu$-norm\footnote{Not to be confused with the integral norm of (Grothendieck-)integral multilinear operators, see e.g.\ \cite{villanueva2003}.}:
\begin{equation}\label{norm:Lp_operators}
\|L\|_{L^p_\mu(X^i,Y)}:=\left(\int_{X^i} |L(h_1,...,h_i)|^p_Yd\mu(h_1,\ldots,h_i)\right)^{1/p},\quad L\in \mathcal L^i(X,Y).
\end{equation}
The main contribution of the manuscript is to develop a systematic theory of the resulting normed spaces and their relations to  classical topologies on spaces $\mathcal L^i(X,Y)$ available  in the literature:
\begin{itemize}
\item \emph{Characterization.} We characterize conditions for which $X$ and $\mu$ lead to a proper norm without quotient. In contrast to Bochner $L^p$-type norms, which are only defined on equivalence classes \cite{hytonen2016analysis}, $\|\cdot\|_{L^p_\mu(X^i,Y)}$ is a norm on $\mathcal L^i(X,Y)$ without quotient, for instance, if $\operatorname{supp}\mu\supset(S^{X,1})^i$, by linearity and continuity (Proposition~\ref{lem:seminorm_norm}).  Moreover, a Radon measure $\mu$ for which $\|\cdot\|_{L^p_\mu(X^i,Y)}$ is a norm exists if and only if $X$ is separable (Proposition~\ref{prop:Lp_norm_separable}).
\item \emph{Comparison with the operator norm.} For every such $\mu$, $\|\cdot\|_{L^p_\mu(X^i,Y)}$ and $\|\cdot\|_{\mathcal L^i(X,Y)}$ are equivalent if and only if $X$ is finite dimensional, and, if $Y$ is a Banach space, $\big(\mathcal L^i(X,Y),\|\cdot\|_{L^p_\mu(X^i,Y)}\big)$ is complete if and only if $X$ is finite dimensional (Proposition~\ref{prop:finite-dim}). Hence, if $X$ is infinite dimensional, these norms are strictly weaker than the operator norm and, for Banach $Y$, never complete.
\item \emph{Comparison with  SOT, COT, WOT.} On operator-norm-bounded subsets of $\mathcal L^i(X,Y)$, all such $L^p_\mu$-norms, $1\leq p<\infty$, induce the same topology as the strong operator topology (SOT) and the compact-open topology (COT), independently of the measure and the exponent (Proposition~\ref{prop:bounded_sets}). If $Y$ is finite dimensional, these topologies also coincide with the weak operator topology (WOT).
\item \emph{Explicit completions.} For $p=2$ and $\mu=\otimes_{j=1}^i\nu$, with $\nu$ a centered probability measure with finite second moment and $Y$ a Hilbert space, or $\nu$ a centered Gaussian measure and $Y$ a Banach space (see Examples~\ref{ex:HS_norms_RKHS} and~\ref{ex:gaussian_measures_radonifying}), we identify the completion isometrically with the space of Hilbert--Schmidt, respectively iterated $\gamma$-radonifying, multilinear operators on the reproducing kernel Hilbert space of $\nu$ (Theorem~\ref{thm:completion}).
\end{itemize}
Motivations for studying these norms come, for instance, from Derivative-Informed Operator Learning, i.e.\ the learning of nonlinear operators $F:X\to Y$ together with their derivatives $D^iF:X\to\mathcal L^i(X,Y)$, where one may want to measure errors of derivatives in integral norms with respect to sampling probability measures \cite{defeo2026universal} (see also \cite{cohen-defeo-hebner-sirignano})\footnote{Operator norms are generally too strong for this purpose: classical architectures fail to universally approximate the derivatives $D^iF(x)$ in these norms.}. 
\paragraph{Outline.}
The manuscript is organized as follows. In Section~\ref{subsec:norm_Lp-multilinear}, we study the main properties of $\|\cdot\|_{L^p_\mu(X^i,Y)}$. In Section~\ref{sec:examples}, we  discuss relations with classical examples: discrete measures, and Hilbert--Schmidt and $\gamma$-radonifying norms. Appendix~\ref{app:basics} collects notations.

\section{Properties}\label{subsec:norm_Lp-multilinear}
In this section, we study when $\|\cdot\|_{L^p_\mu(X^i,Y)}$ is a norm on $\mathcal L^i(X,Y)$, how it compares with $\|\cdot\|_{\mathcal L^i(X,Y)}$, and when $\mathcal L^i(X,Y)$ is complete with respect to it, for $1\leq p<\infty$.

The following proposition gives a sufficient condition for $\|\cdot\|_{L^p_\mu(X^i,Y)}$ to be a norm on $\mathcal L^i(X,Y)$; moreover, $\|\cdot\|_{L^p_\mu(X^i,Y)}$ is always weaker than $\|\cdot\|_{\mathcal L^i(X,Y)}$.
\begin{proposition}\label{lem:seminorm_norm}
Let Assumption~\ref{ass:moments} hold. Then the map $\|\cdot\|_{L^p_\mu(X^i,Y)}$ defined in \eqref{norm:Lp_operators} is a seminorm on $\mathcal L^i(X;Y)$, and it is weaker than $\|\cdot\|_{\mathcal L^i(X,Y)}$, since
\begin{equation}\label{eq:normr_bounded_opnorm}
    \|L\|_{L^p_\mu(X^i,Y)}\leq \|L\|_{\mathcal L^i(X,Y)}\left(\int_{X^i} \prod_{t=1}^i |h_t|_X^pd\mu(h_1,\ldots,h_i)\right)^{1/p}.
\end{equation}
Moreover, if {$\operatorname{supp}\mu\supset {(S^{X,1})^i}$, then it is a norm on $\mathcal L^i(X;Y)$; in particular, no quotient with respect to equivalence relations is needed, unlike for standard $L_\mu^p$-spaces.}
\end{proposition}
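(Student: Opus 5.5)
The argument has three parts: well-definedness with the bound \eqref{eq:normr_bounded_opnorm}, the seminorm axioms, and definiteness under the support condition.

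\textbf{Well-definedness and the bound.} For $L\in\mathcal L^i(X,Y)$ the map $(h_1,\ldots,h_i)\mapsto |L(h_1,\ldots,h_i)|_Y$ is continuous on $X^i$, hence Borel measurable, so the integral in \eqref{norm:Lp_operators} makes sense in $[0,\infty]$. Boundedness of $L$ gives the pointwise estimate $|L(h_1,\ldots,h_i)|_Y\leq \|L\|_{\mathcal L^i(X,Y)}\prod_{t=1}^i|h_t|_X$. Raising this to the $p$-th power, integrating against $\mu$ and taking $p$-th roots yields \eqref{eq:normr_bounded_opnorm}. By Assumption~\ref{ass:moments} the right-hand side is finite, so the quantity is finite for every $L$.

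\textbf{Seminorm axioms.} Absolute homogeneity is immediate, since $|\lambda L(h)|_Y=|\lambda||L(h)|_Y$. For the triangle inequality, $(L_1+L_2)(h)=L_1(h)+L_2(h)$, so $|(L_1+L_2)(h)|_Y\leq |L_1(h)|_Y+|L_2(h)|_Y$ pointwise. Minkowski's inequality in the scalar space $L^p_\mu(X^i)$, applied to the nonnegative functions $|L_1(\cdot)|_Y$ and $|L_2(\cdot)|_Y$, then gives the claim. Both functions lie in $L^p_\mu$ by the previous step.

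\textbf{Definiteness.} Assume $\operatorname{supp}\mu\supset (S^{X,1})^i$ and $\|L\|_{L^p_\mu(X^i,Y)}=0$. Then $|L(\cdot)|_Y^p=0$ $\mu$-a.e. Set $U=\{h\in X^i: L(h)\neq0\}$, which is open because $L$ is continuous. We have $\mu(U)=0$, since otherwise a nonnegative continuous function strictly positive on $U$ would have positive integral. By definition of the support, the largest open $\mu$-null set is $X^i\setminus \operatorname{supp}\mu$; for Radon measures this union of null open sets is null, which is where the Radon assumption is used. Hence $U\cap\operatorname{supp}\mu=\emptyset$, and so $L$ vanishes on $(S^{X,1})^i$. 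Finally, for arbitrary nonzero $h_1,\ldots,h_i$, multilinearity gives
\[
L(h_1,\ldots,h_i)=\prod_{t=1}^i|h_t|_X\,L\bigl(h_1/|h_1|_X,\ldots,h_i/|h_i|_X\bigr)=0.
\]
If some $h_t=0$, the value is $0$ by multilinearity. Therefore $L=0$.

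\textbf{Main obstacle.} The only delicate point is the passage from "$\mu$-a.e. zero" to "zero on the support". This requires the characterization of $\operatorname{supp}\mu$ as the complement of the largest open null set, together with continuity of $L$. I would state it explicitly, using the Radon (inner regular) property to guarantee that the union of all open null sets is null.
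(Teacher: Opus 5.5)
Your proof is correct and follows essentially the same route as the paper: the pointwise bound $|L(h_1,\ldots,h_i)|_Y\leq\|L\|_{\mathcal L^i(X,Y)}\prod_{t=1}^i|h_t|_X$ gives \eqref{eq:normr_bounded_opnorm}, and continuity of $L$ together with the definition of the support shows that $L$ vanishes on $(S^{X,1})^i$, hence everywhere by multilinearity. One minor correction: the Radon property is not needed in the definiteness step, because the open null set $U=\{L\neq0\}$ is itself a null neighborhood of each of its points, so $U\cap\operatorname{supp}\mu=\emptyset$ follows directly from the definition of the support, and whether the union of all open null sets is null plays no role.
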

\begin{proof}
The bound \eqref{eq:normr_bounded_opnorm} follows from Assumption~\ref{ass:moments} and the inequality (see Proposition~\ref{lemma:multilinear}):
$$|L(h_1,\ldots,h_i)|_Y\leq\|L\|_{\mathcal L^i(X,Y)}\prod_{t=1}^i|h_t|_X.$$

{To check that $ \|\cdot\|_{L^p_\mu(X^i,Y)}$ is a norm when $\operatorname{supp}\mu\supset {(S^{X,1})^i}$, it is enough to check that $\|L\|_{L^p_\mu(X^i,Y)}=0$ implies $L=0$. Indeed,  assume $\|L\|_{L^p_\mu(X^i,Y)}{=}0$. 
Denote $$g(h_1,\ldots,h_i):=|L(h_1,\ldots,h_i)|_Y^p.$$ Since $L\in \mathcal L^i(X,Y)$, $g$ is continuous and nonnegative. 
If $g(h_{0,1},\ldots,h_{0,i})>0$  at some $(h_{0,1},\ldots,h_{0,i}) \in \operatorname{supp} \mu$, continuity     would give an open neighborhood $U\subset X^i$ such that $(h_{0,1},\ldots,h_{0,i}) \in U$ and $g>g\left(h_{0,1},\ldots,h_{0,i}\right) / 2$ on $U$; by the definition of support, $\mu(U)>0$, from which $\int_{X^i} g d \mu\geq \int_U g d \mu \geq \mu(U) g\left(h_{0,1},\ldots,h_{0,i}\right) / 2$, contradicting $\int_{X^i} g d \mu=0$. Hence $$g=0,\quad \textit{on } \operatorname{supp}\mu\supset {(S^{X,1})^i},\quad \textit{i.e. }\|L\|_{\mathcal L^i(X,Y)}=0$$ Then $L=0$.} 
\end{proof}

\begin{remark}\label{rem:target_nondegeneracy}
We will use the following observation: if $\|\cdot\|_{L^p_\mu(X^i,Y)}$ is a norm on $\mathcal L^i(X,Y)$, then $\|\cdot\|_{L^p_\mu(X^i,\mathbb R)}$ is a norm on $\mathcal L^i(X,\mathbb R)$. Indeed, assume that $\|z\|_{L^p_\mu(X^i,\mathbb R)}=0$ and
fix $y_0\in Y\setminus\{0\}$. For every $z\in\mathcal L^i(X,\mathbb R)$, we have $$\|z\,y_0\|_{L^p_\mu(X^i,Y)}=|y_0|_Y\|z\|_{L^p_\mu(X^i,\mathbb R)}=0,$$ so that   $z\,y_0=0$, and hence $z=0$.
\end{remark}

\begin{remark}\label{rem:moment_condition}
The moment condition in Assumption~\ref{ass:moments} ensures, through
\eqref{eq:normr_bounded_opnorm}, that
$\|\cdot\|_{L^p_\mu(X^i,Y)}$ is finite on $\mathcal L^i(X,Y)$,
but it is not necessary for this in general. For instance, for $i=1$ and $Y=\mathbb R$, finiteness on $X^*$ is precisely
the existence of weak $p$-th moments, i.e.\
$\int_X|x^*(h)|^p\,d\mu(h)<\infty$ for every $x^*\in X^*$.
In infinite-dimensional  spaces, this condition is strictly
weaker than $\int_X|h|_X^p\,d\mu(h)<\infty$, see
\cite[Section II.2.1, Theorem 2.1]{vakhania-tarialadze}.
\end{remark}
We first characterize the spaces $X$ for which some Radon measure makes the seminorm in \eqref{norm:Lp_operators} a norm.
{\begin{proposition}\label{prop:Lp_norm_separable}
$X$ is separable if and only if there exists a 
measure $\mu$ satisfying Assumption~\ref{ass:moments} such    that $\|\cdot\|_{L^p_{\mu}(X^i,Y)}$ is a norm on $\mathcal L^i(X,Y)$. Consequently, if $X$ is nonseparable, then $\|\cdot\|_{L^p_{\mu}(X^i,Y)}$ is a proper seminorm for every measure $\mu$ satisfying Assumption~\ref{ass:moments}.
\end{proposition}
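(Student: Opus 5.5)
We prove the two implications separately. For \emph{separable $\Rightarrow$ existence}, fix a countable dense subset $\{x_n\}_{n\geq1}$ of the unit sphere $S^{X,1}$ (it exists, since subsets of separable metric spaces are separable). Define the discrete measure on $X^i$
\begin{equation*}
\mu:=\sum_{n_1,\ldots,n_i\geq1}2^{-(n_1+\cdots+n_i)}\,\delta_{(x_{n_1},\ldots,x_{n_i})}.
\end{equation*}
It is a finite positive Radon measure, and $\int_{X^i}\prod_t|h_t|_X^p\,d\mu=\big(\sum_n 2^{-n}\big)^i=1<\infty$, so Assumption~\ref{ass:moments} holds. Every atom is in $\operatorname{supp}\mu$ and the support is closed, so $\operatorname{supp}\mu\supset\overline{\{x_n\}^i}=(S^{X,1})^i$. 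Proposition~\ref{lem:seminorm_norm} then shows that $\|\cdot\|_{L^p_\mu(X^i,Y)}$ is a norm.

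For the converse, suppose $\mu$ satisfies Assumption~\ref{ass:moments} and $\|\cdot\|_{L^p_\mu(X^i,Y)}$ is a norm. By Remark~\ref{rem:target_nondegeneracy}, $\|\cdot\|_{L^p_\mu(X^i,\mathbb R)}$ is then a norm on $\mathcal L^i(X,\mathbb R)$. The moment condition gives $\prod_t|h_t|_X<\infty$ $\mu$-a.e. Consider
\begin{equation*}
A:=\{(h_1,\ldots,h_i):h_t\neq0\ \forall t\}.
\end{equation*}
On $X^i\setminus A$ every $L$ vanishes, so only the restriction of $\mu$ to $A$ matters. Let $\tilde\mu:=\prod_t|h_t|^p\,\mu$ on $A$; this is a finite Borel measure, and it is Radon because $\mu$ is. A finite Radon measure is concentrated on a $\sigma$-compact set $K=\bigcup_m K_m$. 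Each compact $K_m\subset X^i$ is separable, so its coordinate projections $\pi_t(K_m)$ are compact and separable. Hence
\begin{equation*}
Z:=\overline{\operatorname{span}}\bigcup_{m,t}\pi_t(K_m)
\end{equation*}
is a closed separable subspace of $X$, and $\mu$-a.e.\ point of $A$ lies in $Z^i$. Here $\tilde\mu$ and $\mu$ have the same null sets on $A$, since the density $\prod_t|h_t|^p$ is strictly positive there.

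Now suppose $X$ is not separable. Then $Z\neq X$, and by Hahn--Banach there is $x^*\in X^*\setminus\{0\}$ with $x^*|_Z=0$. Set $z(h_1,\ldots,h_i):=\prod_t x^*(h_t)\in\mathcal L^i(X,\mathbb R)$, which is nonzero. This $z$ vanishes on $Z^i$ and on $X^i\setminus A$, hence $\mu$-a.e., so $\|z\|_{L^p_\mu}=0$. That contradicts the norm property, so $X$ must be separable. The ``consequently'' statement is just the contrapositive of this implication.

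The main obstacle is the inner-regularity step, namely justifying that $\mu$ is concentrated on a $\sigma$-compact set. The measure $\mu$ itself may be infinite; it is only the weighted measure that is finite. The plan is to work with $\tilde\mu$, which is finite, and to check that it is still Radon. Inner regularity of $\tilde\mu$ on Borel sets follows from that of $\mu$ on the sets $A_k:=\{\prod_t|h_t|>1/k\}$: each $A_k$ has $\mu(A_k)<\infty$ by the moment bound, so inner regularity of $\mu$ applies there, and $\bigcup_k A_k=A$. One then passes to $\tilde\mu$ by monotone convergence.
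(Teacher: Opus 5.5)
Your proof is correct and follows the paper's argument. For ($\Rightarrow$) you use the same product of weighted Dirac masses on a dense subset of $S^{X,1}$. For ($\Leftarrow$) you use the same steps: a finite Radon reweighting of $\mu$, concentration on a $\sigma$-compact set, a separable closed span $Z$, and a Hahn--Banach functional $x^*$ whose $i$-fold product is $\mu$-a.e.\ zero.

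The differences are minor. The paper uses the bounded density $w/(1+w)$ with $w=\prod_t|h_t|_X^p$, so that $\nu\leq\mu$ and inner regularity transfers immediately. Your unbounded density $w$ needs, in the step you sketch, the absolute continuity of $E\mapsto\int_E w\,d\mu$ (or a truncation of $w$). You also take all coordinate projections instead of $\pi_1$, and pass through Remark~\ref{rem:target_nondegeneracy} rather than using $z\,y_0$ directly.
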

\begin{proof}
($\Rightarrow$) Let $X$ be separable and let $\{x_n\}_{n\in\mathbb N}$ be dense in $S^{X,1}$. Set $\nu:=\sum_{n\in\mathbb N}2^{-n}\delta_{x_n}$ and $\mu:=\otimes_{j=1}^i\nu$, a Radon probability measure on $X^i$ with $\int_{X^i}\prod_{t=1}^i|h_t|_X^p\,d\mu=1$ and we have\footnote{Each point \((x_{n_1},\ldots,x_{n_i})\) belongs to \(\operatorname{supp}\mu\), since every neighborhood of it has positive \(\mu\)-measure. As the support is closed and these points are dense in \((S^{X,1})^i\), we obtain \((S^{X,1})^i\subseteq\operatorname{supp}\mu\). Conversely, \(\mu\) is concentrated on the closed set \((S^{X,1})^i\), so \(\operatorname{supp}\mu\subseteq(S^{X,1})^i\). Hence \(\operatorname{supp}\mu=(S^{X,1})^i\).} $\operatorname{supp}\mu=(S^{X,1})^i$, so that $\|\cdot\|_{L^p_{\mu}(X^i,Y)}$ is a norm by Proposition~\ref{lem:seminorm_norm}. 

($\Leftarrow$) Let Assumption~\ref{ass:moments} hold. Assume that $\|\cdot\|_{L^p_{\mu}(X^i,Y)}$ is a norm.  We can assume that $\mu$ is finite in view of the following. Set $w(h_1,\ldots,h_i):=\prod_{t=1}^i|h_t|_X^p$ and $d\nu:=\frac{w}{1+w}\,d\mu$. Then $\nu$ is finite Radon and satisfies the same moment condition, since $\nu(X^i)+\int_{X^i}w\,d\nu=\int_{X^i}w\,d\mu<\infty$, and the corresponding seminorms have the same kernel. Thus, replacing $\mu$ by $\nu$, we may assume that $\mu$ is finite. 

Being    Radon, $\mu$ is concentrated    on a $\sigma$-compact, hence separable, set $K\subset X^i$ (i.e. $\mu(X^i\setminus K)=0$, with $K=\bigcup_{n\in\mathbb N}K_n$ for some compact sets $K_n\subset X^i$; by tightness, one can take $K_n$ such that $\mu(X^i\setminus K_n)<1/n$),     so that $$Z:=\overline{\operatorname{span}\,\pi_1(K)}, \qquad  \textit{where } \pi_1(h_1,\ldots,h_i):=h_1,$$ is a separable       closed subspace of $X$. We show that 
$$X=Z,$$
which, in turn, implies   that $X$ is separable,     concluding the proof.
 Indeed, assume by contradiction    that $Z\neq X$. Then, by the Hahn--Banach theorem, there exists $x^*\in X^*\setminus\{0\}$ vanishing on $Z$; fixing $y_0\in Y\setminus\{0\}$, the map\footnote{the product is needed: for $i\geq2$, the map $(h_1,\ldots,h_i)\mapsto x^*(h_1)\,y_0$ is constant in $h_2,\ldots,h_i$, hence not $i$-linear} $L(h_1,\ldots,h_i):=\big(\prod_{t=1}^i x^*(h_t)\big)\,y_0$ is    nonzero    and vanishes on $K$ (since, for $(h_1,\ldots,h_i)\in K$ we have $h_1\in\pi_1(K)\subset Z$, hence $x^*(h_1)=0$, which implies that $L$ vanishes on $K$) so that $$\|L\|_{L^p_{\mu}(X^i,Y)}^p=\int_K|L(h_1,\ldots,h_i)|_Y^p\,d\mu=0,$$ contradicting     the fact that $\|\cdot\|_{L^p_{\mu}(X^i,Y)}$ is a norm. Hence $X=Z$.
\end{proof}}

Whenever $\|\cdot\|_{L^p_\mu(X^i,Y)}$ is a norm, we compare it with the operator norm $\|\cdot\|_{\mathcal L^i(X,Y)}$.
\begin{proposition}\label{prop:finite-dim}
Let Assumption~\ref{ass:moments} hold. Assume that $\|\cdot\|_{L^p_\mu(X^i,Y)}$ is a norm on $\mathcal L^i(X,Y)$. Then:
\begin{enumerate}[(i)]
\item $\|\cdot\|_{L^p_\mu(X^i,Y)}$ and $\|\cdot\|_{\mathcal L^i(X,Y)}$ are equivalent if and only if $X$ is finite dimensional; if $X$ is infinite dimensional, then $\|\cdot\|_{L^p_\mu(X^i,Y)}$ induces a strictly weaker topology on $\mathcal L^i(X,Y)$ than $\|\cdot\|_{\mathcal L^i(X,Y)}$;
\item if $Y$ is a Banach space, then $\big(\mathcal L^i(X,Y),\|\cdot\|_{L^p_\mu(X^i,Y)}\big)$ is complete if and only if $X$ is finite dimensional.
\end{enumerate}
\end{proposition}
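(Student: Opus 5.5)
Both parts rest on one construction: when $X$ is infinite dimensional, we find operators of operator norm one whose $L^p_\mu$-norms tend to zero. The finite-dimensional directions come for free.

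\emph{Finite-dimensional directions.} If $\dim X<\infty$, then $\mathcal L^i(X,Y)$ is finite dimensional whenever $Y$ is. For general $Y$ we argue directly. Choose a basis $e_1,\dots,e_d$ of $X$. An operator $L$ is then determined by the finitely many vectors $L(e_{k_1},\dots,e_{k_i})\in Y$, and $\|L\|_{\mathcal L^i(X,Y)}$ is equivalent to $\max_{k}|L(e_{k_1},\dots,e_{k_i})|_Y$. So $\mathcal L^i(X,Y)\cong Y^{d^i}$ topologically. Under this identification, $L\mapsto\|L\|_{L^p_\mu(X^i,Y)}$ is a norm of the form $\big(\int|\sum_k c_k(h)\,y_k|_Y^p\,d\mu\big)^{1/p}$, where the $c_k(h)$ are products of coordinates.

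To get equivalence, I would use compactness in the coefficient space. The functional $\Phi(c):=\big(\int|\sum_k c_k(h)\,y_k|^p\,d\mu\big)^{1/p}$ is positive on $\mathbb R^{d^i}\setminus\{0\}$ for fixed nonzero data. Hence by Remark~\ref{rem:target_nondegeneracy} and continuity, $\|z\|_{L^p_\mu(X^i,\mathbb R)}\ge c\,\|z\|$ on the finite-dimensional space $\mathcal L^i(X,\mathbb R)$.

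For the $Y$-valued case, fix $L\neq0$ and pick $y^*\in S^{Y^*,1}$ with $y^*\circ L$ attaining nearly $\|L\|$. Then
\[
\|L\|_{L^p_\mu}\ge\|y^*\circ L\|_{L^p_\mu(X^i,\mathbb R)}\ge c\,\|y^*\circ L\|_{\mathcal L^i(X,\mathbb R)}\ge \tfrac c2\|L\|.
\]
Combined with \eqref{eq:normr_bounded_opnorm}, this gives equivalence. Completeness for Banach $Y$ follows because $(\mathcal L^i(X,Y),\|\cdot\|_{\mathcal L^i})$ is complete.

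\emph{Infinite-dimensional directions.} It suffices to produce $L_n$ with $\|L_n\|_{\mathcal L^i}=1$ and $\|L_n\|_{L^p_\mu}\to0$. This refutes equivalence, and since \eqref{eq:normr_bounded_opnorm} already gives one inequality, it shows the $L^p_\mu$ topology is strictly weaker.

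For the construction, first reduce to finite $\mu$ as in the proof of Proposition~\ref{prop:Lp_norm_separable}. Then use tightness: given $\varepsilon>0$, pick a compact $K\subset X^i$ with $\int_{X^i\setminus K}\prod_t|h_t|^p\,d\mu<\varepsilon$. The projections $\pi_t(K)$ lie in a compact set $C\subset X$. Since $C$ is compact and $X$ is infinite dimensional, there is a finite-codimensional closed subspace on which everything is small. Concretely, cover $C$ by a finite $\varepsilon$-net $F$ and take $x^*\in S^{X^*,1}$ vanishing on $\operatorname{span}F$; such an $x^*$ exists because $\dim X=\infty$. Then $|x^*(h)|\le\varepsilon$ for $h\in C$.

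Set $L:=\big(\prod_t x^*(h_t)\big)\,y_0$ with $|y_0|=1$, so $\|L\|_{\mathcal L^i}=1$. Then
\[
\|L\|_{L^p_\mu}^p\le\varepsilon^{ip}\mu(K)+\varepsilon.
\]
Letting $\varepsilon\to0$ gives the desired sequence.

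\emph{Failure of completeness (ii).} Assume for contradiction that $(\mathcal L^i,\|\cdot\|_{L^p_\mu})$ is complete. By \eqref{eq:normr_bounded_opnorm}, the identity map from $(\mathcal L^i,\|\cdot\|_{\mathcal L^i})$, which is complete since $Y$ is Banach, onto this space is a continuous bijection between Banach spaces. The open mapping theorem then makes its inverse bounded, i.e.\ the two norms are equivalent. This contradicts (i). The main obstacle is the construction of the small-$L^p_\mu$ unit operators, and in particular arranging the uniform smallness of $x^*$ on the compact set; the finite $\varepsilon$-net with an annihilating functional handles this.
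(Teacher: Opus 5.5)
Your finite-dimensional direction and part (ii) are essentially the paper's argument: Remark~\ref{rem:target_nondegeneracy}, equivalence of norms on the finite-dimensional space $\mathcal L^i(X,\mathbb R)$, a norming functional $y^*\in Y^*$, and the bounded inverse theorem. The detour through $Y^{d^i}$ and the functional $\Phi$ is not needed and can be deleted.

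The genuine difference is the infinite-dimensional construction. The paper uses Proposition~\ref{prop:Lp_norm_separable} to get a dense sequence $(x_k)$ in $X$. It takes unit functionals $f_n$ vanishing on $x_1,\dots,x_n$, notes that $f_n\to0$ pointwise, and concludes $\|L_n\|_{L^p_\mu(X^i,Y)}\to0$ by dominated convergence with dominating function $w:=\prod_t|h_t|_X^p$. You instead use inner regularity of $\mu$ to put almost all of the mass of $w\,d\mu$ on a compact set $K$. You then use total boundedness of $C=\bigcup_t\pi_t(K)$ to get a single unit functional that is uniformly small on $C$. Your version never uses separability of $X$ or the hypothesis that the seminorm is a norm, and it gives an explicit bound. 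The paper's version is softer, but it handles an infinite measure $\mu$ for free, since dominated convergence only needs $w\in L^1_\mu$.

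That last point is where your write-up needs a repair. The reduction ``to finite $\mu$ as in the proof of Proposition~\ref{prop:Lp_norm_separable}'' does not work here. Replacing $\mu$ by $\frac{w}{1+w}\,\mu$ preserves the kernel of the seminorm but not its size, and you need $\|L\|_{L^p_\mu(X^i,Y)}$ small for the original $\mu$. Without that reduction, the bound $\varepsilon^{ip}\mu(K)+\varepsilon$ need not tend to $0$, because $K=K_\varepsilon$ and $\mu(K_\varepsilon)$ may blow up.

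The fix is to decouple the two scales:
\begin{enumerate}[(1)]
\item Choose $K$ compact with $\int_{X^i\setminus K}w\,d\mu<\varepsilon$. For infinite $\mu$, first pick $\eta>0$ with $\int_{\{w\le\eta\}}w\,d\mu<\varepsilon/2$. The set $\{w>\eta\}$ has finite $\mu$-measure, so inner regularity and absolute continuity of the integral give $K$ there.
\item Note that $\mu(K)<\infty$, since a Radon measure is finite on compact sets.
\item Only then take the net of mesh $\delta$ with $\delta^{ip}\mu(K)<\varepsilon$.
\end{enumerate}
With this ordering, your estimate gives $\|L\|_{L^p_\mu(X^i,Y)}^p<2\varepsilon$ with $\|L\|_{\mathcal L^i(X,Y)}=1$, and the rest of your argument goes through unchanged.
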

\begin{proof}
(i) By \eqref{eq:normr_bounded_opnorm}, the topology induced by $\|\cdot\|_{L^p_\mu(X^i,Y)}$ is weaker than the one induced by $\|\cdot\|_{\mathcal L^i(X,Y)}$.

Let $X$ be finite dimensional, and note that $\|\cdot\|_{L^p_\mu(X^i,\mathbb R)}$ is a norm on the finite-dimensional space $\mathcal L^i(X,\mathbb R)$ by Remark~\ref{rem:target_nondegeneracy}; hence, by equivalence of norms in this case, there exists $C>0$ such that $\|z\|_{\mathcal L^i(X,\mathbb R)}\leq C\|z\|_{L^p_\mu(X^i,\mathbb R)}$ for all $z\in\mathcal L^i(X,\mathbb R)$. Let $L\in\mathcal L^i(X,Y)$ and $h_1,\ldots,h_i\in X$ with $|h_1|_X=\cdots=|h_i|_X=1$. By the corollary of the Hahn--Banach theorem \cite[Corollary~1.4]{brezis2011}, there exists $y^*\in Y^*$ with $\|y^*\|_{Y^*}\leq1$ and $|y^*(L(h_1,\ldots,h_i))|=y:=|L(h_1,\ldots,h_i)|_Y$. Hence, applying the above equivalence of norms to $z=y^*\circ L$, we obtain
\begin{align*}
|L(h_1,\ldots,h_i)|_Y&=|(y^*\circ L)(h_1,\ldots,h_i)|\leq\|y^*\circ L\|_{\mathcal L^i(X,\mathbb R)}\leq C\|y^*\circ L\|_{L^p_\mu(X^i,\mathbb R)}\\
&\leq C\|y^*\|_{Y^*}\|L\|_{L^p_\mu(X^i,Y)}\leq C\|L\|_{L^p_\mu(X^i,Y)}.
\end{align*}
Taking the supremum over all such $h_1,\ldots,h_i$ gives $\|L\|_{\mathcal L^i(X,Y)}\leq C\|L\|_{L^p_\mu(X^i,Y)}$.

Let $X$ be infinite dimensional. We construct a sequence $(L_n)_{n\in\mathbb N}\subset\mathcal L^i(X,Y)$ such that $\|L_n\|_{\mathcal L^i(X,Y)}=1$ for all $n\in\mathbb N$ and $\|L_n\|_{L^p_\mu(X^i,Y)}\to0$, which shows that the two topologies differ (making the  one from $L^p_\mu$-norm strictly weaker). Indeed, by Proposition~\ref{prop:Lp_norm_separable}, $X$ is separable; let $\{x_k\}_{k\in\mathbb N}$ be dense in $X$. For $n\in\mathbb N$, $\operatorname{span}\{x_1,\ldots,x_n\}\neq X$, so, by the Hahn--Banach theorem, there exists $f_n\in X^*$ with $\|f_n\|_{X^*}=1$ vanishing on $x_1,\ldots,x_n$. Fix $z^*\in X^*$ and $y_0\in Y$ with $\|z^*\|_{X^*}=|y_0|_Y=1$, and set $$L_n(h_1,\ldots,h_i):=f_n(h_1)\prod_{t=2}^iz^*(h_t)\,y_0.$$ Then:
\begin{itemize}
\item \emph{Convergence in $L^p_\mu$.} Since $f_n(x_k)=0$ for $n\geq k$, we have $f_n(x_k)\to0$ for all $k$; since $\|f_n\|_{X^*}=1$ and $\{x_k\}$ is dense, $f_n(x)\to0$ for all $x\in X$.\footnote{Indeed, given $x\in X$ and $\varepsilon>0$, choose $k\in\mathbb N$ with $|x-x_k|_X<\varepsilon$; then, for all $n\geq k$, $|f_n(x)|\leq|f_n(x_k)|+|f_n(x-x_k)|\leq\|f_n\|_{X^*}|x-x_k|_X<\varepsilon$.} Hence $L_n\to0$ pointwise on $X^i$.  Since $|L_n(h_1,\ldots,h_i)|_Y^p\leq\prod_{t=1}^i|h_t|_X^p$, which is $\mu$-integrable by assumption, dominated convergence gives $\|L_n\|_{L^p_\mu(X^i,Y)}\to0$.
\item \emph{Operator norm.} $\|L_n\|_{\mathcal L^i(X,Y)}=\|f_n\|_{X^*}\|z^*\|_{X^*}^{i-1}|y_0|_Y=1$ for all $n\in\mathbb N$.
\end{itemize}

(ii) Let $Y$ be a Banach space and let $I\colon\big(\mathcal L^i(X,Y),\|\cdot\|_{\mathcal L^i(X,Y)}\big)\to\big(\mathcal L^i(X,Y),\|\cdot\|_{L^p_\mu(X^i,Y)}\big)$ be the identity map. By Proposition~\ref{lemma:multilinear}, $\big(\mathcal L^i(X,Y),\|\cdot\|_{\mathcal L^i(X,Y)}\big)$ is a Banach space, since $Y$ is; moreover, $I$ is a linear bijection, which is bounded by \eqref{eq:normr_bounded_opnorm}. If $X$ is finite dimensional, the two norms are equivalent by (i), so that $\big(\mathcal L^i(X,Y),\|\cdot\|_{L^p_\mu(X^i,Y)}\big)$ is complete. Conversely, assume that $\big(\mathcal L^i(X,Y),\|\cdot\|_{L^p_\mu(X^i,Y)}\big)$ is complete. Then both spaces are Banach spaces, so that, by the bounded inverse theorem \cite[Section~2.3]{brezis2011}, $I^{-1}$ is bounded, i.e.\ there exists $C>0$ such that
\[
\|L\|_{\mathcal L^i(X,Y)}\leq C\|L\|_{L^p_\mu(X^i,Y)}\qquad\text{for all }L\in \mathcal L^i(X,Y).
\]
Together with \eqref{eq:normr_bounded_opnorm}, this shows that the two norms are equivalent, so that $X$ is finite dimensional by (i).
\end{proof}

\begin{proposition}\label{prop:bounded_sets}
Let Assumption~\ref{ass:moments} hold. Assume that $\|\cdot\|_{L^p_\mu(X^i,Y)}$ is a norm on $\mathcal L^i(X,Y)$. On every operator-norm-bounded subset of $\mathcal L^i(X,Y)$, the topology induced by this norm coincides with the strong operator topology and the compact-open topology\footnote{topologies  of pointwise and uniform convergence on compact subsets of $X^i$, respectively}. In particular, all such norms, independently of the measure and the finite exponent, induce the same topology on these subsets. If $Y$ is finite dimensional, these topologies also coincide with the weak operator topology.
\end{proposition}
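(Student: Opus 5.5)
Fix $R>0$ and the ball $B_R:=\{L\in\mathcal L^i(X,Y):\|L\|_{\mathcal L^i(X,Y)}\le R\}$. Since all topologies involved are translation invariant, and differences of elements of $B_R$ lie in $B_{2R}$, it suffices to compare convergence of nets in $B_{2R}$ to $0$ in the three topologies. By Proposition~\ref{prop:Lp_norm_separable}, $X$ is separable, so $B_R$ with SOT is metrizable. We still argue with nets, which are needed for the COT anyway. I plan to prove the chain of implications
\[
\text{COT}\Rightarrow L^p_\mu\Rightarrow\text{SOT}\Rightarrow\text{COT}
\]
for nets $(L_\alpha)\subset B_R$ converging to $0$.

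\emph{SOT $\Rightarrow$ COT.} Multilinear maps in $B_R$ are locally uniformly Lipschitz: by telescoping and Proposition~\ref{lemma:multilinear},
\[
|L(h)-L(k)|_Y\le R\sum_t\prod_{s\ne t}\max(|h_s|_X,|k_s|_X)\,|h_t-k_t|_X .
\]
Hence $B_R$ is equicontinuous on bounded subsets of $X^i$. On equicontinuous families, pointwise convergence implies uniform convergence on compacts, by the standard finite $\varepsilon$-net argument.

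\emph{COT $\Rightarrow L^p_\mu$.} By Assumption~\ref{ass:moments} and Radon regularity, the finite measure $w\,d\mu$, with $w=\prod_t|h_t|_X^p$, is tight (as in the proof of Proposition~\ref{prop:Lp_norm_separable}). Given $\varepsilon>0$, choose a compact $K$ with $\int_{X^i\setminus K}w\,d\mu<\varepsilon$. Split the integral $\int|L_\alpha|^p_Y\,d\mu$ into $K$ and its complement. The complement contributes at most $R^p\varepsilon$. On $K$, the integrand tends to $0$ uniformly. One subtlety is that $\mu(K)$ might be infinite. I avoid this by first replacing $\mu$ with $\nu=\frac{w}{1+w}\mu$ as in Proposition~\ref{prop:Lp_norm_separable}, or by taking $K$ inside $\{w\ge\delta\}\cup$ a neighbourhood of the set where some $h_t=0$, where $L=0$. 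The cleaner route is to use the bound $|L_\alpha(h)|_Y^p\le \sup_{K}\big(|L_\alpha|^p_Y/w\big)\,w$, where the ratio $|L(h)|_Y/\prod_t|h_t|_X$ is homogeneous. I therefore normalize: uniform convergence on the compact set $\{(h_t/|h_t|_X)_t : h\in K,\ h_t\neq 0\}$. I will handle this by working with the image of $K$ under normalization, which is not compact in general. Instead I simply reduce to finite $\mu$ via the replacement by $\nu$ together with the $w$ weight, and expect this step to be routine.

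\emph{$L^p_\mu\Rightarrow$ SOT.} This is the main obstacle. Here the support condition is not assumed, only that $\|\cdot\|_{L^p_\mu}$ is a norm. I argue by contradiction with subnets. Suppose $\|L_\alpha\|_{L^p_\mu}\to0$ but $L_\alpha(h^0)\not\to0$ for some $h^0$. The set $B_R$ is compact-ish only in weaker senses. For $Y$ finite dimensional, I would use Tychonoff: $B_R$ is pointwise compact, since it is closed in $\prod_{h}\{|y|\le R\prod|h_t|\}$ and multilinearity is preserved. A subnet then converges pointwise to some $L\in B_R$, hence in COT, hence in $L^p_\mu$ by the previous step. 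Thus $\|L\|_{L^p_\mu}=0$, so $L=0$, a contradiction. For general $Y$ this compactness fails. I will try to reduce to scalars: apply the argument to $y^*\circ L_\alpha$ for $y^*\in Y^*$ of norm at most one, using Remark~\ref{rem:target_nondegeneracy}. This yields WOT convergence, $y^*(L_\alpha h)\to0$. To upgrade to norm convergence, I would first try exploiting separability of the closed span of $L_\alpha(X^i)$ along a sequence, which is available since the SOT is metrizable on $B_R$. Failing that, I would consider the functions $g_\alpha=|L_\alpha|_Y$, which are uniformly locally Lipschitz. Via Arzel\`a--Ascoli, a subsequence converges locally uniformly to a function $g$ with $\int g^p\,d\mu=0$. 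One then needs $g=0$ everywhere, which is the delicate point, and I expect to have to exploit the norm property via some structural argument on $\{g=0\}$. Finally, for finite-dimensional $Y$, WOT and SOT coincide on $B_R$ since weak and norm convergence agree in $Y$. This gives the last claim.
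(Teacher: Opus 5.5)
\textbf{Where the proposal stands.} Your implications SOT $\Rightarrow$ COT (by equicontinuity) and COT $\Rightarrow L^p_\mu$ are fine. The second is simpler than you make it: a Radon measure is locally finite, so $\mu(K)<\infty$ for every compact $K$, and you need no replacement of $\mu$ by $\nu$. The paper instead uses dominated convergence along sequences, which suffices because SOT is metrizable on $B_R$. Your Tychonoff/subnet argument also correctly proves $L^p_\mu\Rightarrow$ SOT when $Y$ is finite dimensional, in particular for $Y=\mathbb R$ (using Remark~\ref{rem:target_nondegeneracy}). It is the same compactness mechanism as the paper's Banach--Alaoglu argument on $\mathcal L^i(X,\mathbb R)\cong(X^{\widehat\otimes_\pi i})^*$ combined with ``compact to Hausdorff''.

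\textbf{The gap.} The proposal does not prove $L^p_\mu\Rightarrow$ SOT for general $Y$. Composing with a fixed $y^*$ only gives weak operator convergence $y^*(L_\alpha h)\to0$, and neither proposed upgrade closes this. Separability of the range does not turn weak convergence into norm convergence (think of an orthonormal sequence in $\ell^2$). In the Arzel\`a--Ascoli route, the limit $g$ of $|L_n|_Y$ is not of the form $|L|_Y$ with $L\in\mathcal L^i(X,Y)$, so the norm hypothesis cannot be invoked to get $g=0$; this is exactly the step left unproved.

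\textbf{The fix.} Your scalar result holds for \emph{arbitrary} nets in the scalar ball, so the functional may depend on the index. By Hahn--Banach choose $y^*_\alpha$ with $\|y^*_\alpha\|_{Y^*}\le1$ and $y^*_\alpha(L_\alpha(h^0))=|L_\alpha(h^0)|_Y$, and set $z_\alpha:=y^*_\alpha\circ L_\alpha$. Then $\|z_\alpha\|_{\mathcal L^i(X,\mathbb R)}\le R$ and $\|z_\alpha\|_{L^p_\mu(X^i,\mathbb R)}\le\|L_\alpha\|_{L^p_\mu(X^i,Y)}\to0$. The scalar case therefore gives $|L_\alpha(h^0)|_Y=z_\alpha(h^0)\to0$. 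This is precisely how the paper concludes. It extracts from the scalar compactness argument a $\delta$ that is uniform over the scalar ball $\mathcal B_M$, applies it to $y^*\circ(L_n-L)$ simultaneously for all $\|y^*\|_{Y^*}\le1$, and then uses a norming functional $y_n^*$.
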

\begin{proof}
If $Y$ is finite dimensional, the weak and norm topologies on $Y$ coincide, so that the weak and strong operator topologies coincide. By Proposition~\ref{prop:Lp_norm_separable}, $X$ is separable; fix a countable dense set $D\subset X$. Since the strong operator and compact-open topologies coincide on operator-norm-bounded subsets and are metrizable (the pointwise topology is determined by the countably many evaluations on $D^i$), it suffices to show that, for every $(L_n)_{n\in\mathbb N}\subset\mathcal L^i(X,Y)$ with $\sup_n\|L_n\|_{\mathcal L^i(X,Y)}<\infty$ and every $L\in\mathcal L^i(X,Y)$,
\[
\|L_n-L\|_{L^p_\mu(X^i,Y)}\to0\quad\Longleftrightarrow\quad L_n(h)\to L(h)\quad\text{for all }h\in X^i.
\]
Indeed, set $M:=\sup_n\|L_n-L\|_{\mathcal L^i(X,Y)}<\infty$.

($\Leftarrow$) Let $L_n\to L$ pointwise. Since\footnote{Note that, if $X$ is Banach, this implication does not require an a priori uniform bound on $\|L_n-L\|_{\mathcal L^i(X,Y)}$, since this follows from pointwise convergence by the multilinear Banach--Steinhaus theorem.} \(
|(L_n-L)(h_1,\ldots,h_i)|_Y^p\leq M^p\prod_{t=1}^i|h_t|_X^p
\), dominated convergence gives $\|L_n-L\|_{L^p_\mu(X^i,Y)}\to0$.

($\Rightarrow$) Assume that $\|L_n-L\|_{L^p_\mu(X^i,Y)}\to0$. We first show that the weak$^*$ topology (hence, pointwise topology) and the scalar $L^p_\mu(X^i,\mathbb R)$-norm topologies coincide on the closed ball $\mathcal B_M$ of radius $M$ in $\mathcal L^i(X,\mathbb R)$. Indeed,
 let $E:=X^{\widehat\otimes_\pi i}$ be the completed $i$-fold projective tensor product, giving the isometric isomorphism $\mathcal L^i(X,\mathbb R)\cong E^*$ \cite[Section~2.2]{ryan2002tensor}. Thus $\mathcal B_M$, 
identifies with the closed ball of radius $M$ in $E^*$, which is weak$^*$-compact by Banach--Alaoglu. By the scalar version\footnote{Notice that the scalar seminorm is a norm by Remark~\ref{rem:target_nondegeneracy}.} of ($\Leftarrow$) and metrizability, the identity from $\mathcal B_M$ (endowed with this topology) into the scalar $L^p_\mu$-normed space is continuous. Thus this identity is a homeomorphism onto its image (i.e. a continuous bijection with continuous inverse), since its domain is compact and its target is Hausdorff\footnote{Note that this step may fail if the scalar $L^p_\mu$-seminorm (and hence the full $L^p_\mu$-seminorm) is not a norm.} \cite[Theorem~26.6]{munkres2000topology}; that is the pointwise and scalar $L^p_\mu$-norm topologies coincide on $\mathcal B_M$.

Consequently, for every $h\in X^i$ and $\varepsilon>0$, there exists $\delta>0$ such that if
\(
z\in\mathcal B_M,
\|z\|_{L^p_\mu(X^i,\mathbb R)}<\delta\), then  \(|z(h)|<\varepsilon.
\)
Applying this, for sufficiently large $n$ and every $y^*\in Y^*$ with $\|y^*\|_{Y^*}\leq1$, we have $\|y^*\circ(L_n-L)\|_{\mathcal L^i(X,\mathbb R)}\leq M$ and
\[
\begin{gathered}
\|y^*\circ(L_n-L)\|_{L^p_\mu(X^i,\mathbb R)}
\leq\|L_n-L\|_{L^p_\mu(X^i,Y)}<\delta
\Longrightarrow\quad |y^*((L_n-L)(h))|<\varepsilon.
\end{gathered}
\]

By the corollary of the Hahn--Banach theorem \cite[Corollary~1.4]{brezis2011}, for each $n$ there exists $y_n^*\in Y^*$ with $\|y_n^*\|_{Y^*}\leq1$ and $|y_n^*((L_n-L)(h))|=y:=|(L_n-L)(h)|_Y$. Hence, for all sufficiently large $n$,
\[
|(L_n-L)(h)|_Y=y=|y_n^*((L_n-L)(h))|<\varepsilon,
\]

which proves pointwise convergence.

\end{proof}

\section{Classical Examples}\label{sec:examples}
In this section, we discuss examples of the norms/seminorms in \eqref{norm:Lp_operators}, their relation to Hilbert--Schmidt and $\gamma$-radonifying operators on the associated RKHS.

\subsection{Discrete measures}\label{subsec:discrete}
\begin{example}[Discrete measures]\label{ex:discrete}
Let $\{x_n\}_{n\in\mathbb N}$ be a sequence in $\{h\in X:|h|_X\leq1\}^i$ whose closure contains $(S^{X,1})^i$, and let $w_n>0$, $n\in\mathbb N$, with $\sum_{n\in\mathbb N}w_n<\infty$. Then $\mu:=\sum_{n\in\mathbb N}w_n\delta_{x_n}$ is a finite Radon measure on $X^i$ satisfying Assumption~\ref{ass:moments}, with $\operatorname{supp}\mu\supset(S^{X,1})^i$, and \eqref{norm:Lp_operators} becomes a weighted $\ell^p$-norm of the values of $L$ along $\{x_n\}$:
\[
\|L\|_{L^p_\mu(X^i,Y)}=\Big(\sum_{n\in\mathbb N}w_n|L(x_n)|_Y^p\Big)^{1/p},\qquad L\in\mathcal L^i(X,Y),
\]
which is a norm by Proposition~\ref{lem:seminorm_norm}. The measure $\mu$ in the proof of Proposition~\ref{prop:Lp_norm_separable} is of this form. For $i=p=1$, $w_n=2^{-n}$ and $\{x_n\}$ dense in the closed unit ball of $X$, we obtain $\sum_{n\in\mathbb N}2^{-n}|Lx_n|_Y$, which is classically used to metrize the strong operator topology on bounded subsets of $\mathcal L(X,Y)$, see e.g.\ \cite[Remark~3.19(d)]{kruse-schwenninger2022}.
\end{example}

\subsection{\texorpdfstring{Hilbert--Schmidt and $\gamma$-radonifying operators}{Hilbert--Schmidt and gamma-radonifying operators}}\label{sec:HS}
Let $\mu$ be as in Proposition~\ref{prop:finite-dim}. By Proposition~\ref{prop:finite-dim}(ii), if $Y$ is a Banach space and $X$ is infinite dimensional, then $\big(\mathcal L^i(X,Y),\|\cdot\|_{L^p_\mu(X^i,Y)}\big)$ is not complete. However, in the following important examples, its completion can be identified explicitly (Theorem~\ref{thm:completion}).

For $p=2$ and $Y$ {a} separable Hilbert space, we have the following characterization:
\begin{example}[Hilbert--Schmidt norms on Reproducing Kernel Hilbert Spaces]\label{ex:HS_norms_RKHS}
{Let $X$ be a separable Banach space, let $Y$ be a
 separable
Hilbert space, and let $\nu$ be a (Borel) probability measure on
$X$ (hence, Radon) with $\int_X| h|_X^2\,d\nu(h)<\infty$. For simplicity, assume that $\nu$ is centered (i.e. with zero mean).
Denote the   covariance kernel by $K_\nu :X^*\times X^* \to \mathbb R$ and the covariance operator by $R_\nu:X^*\to X$,  characterized 
by  \cite[Section III.2.1]{vakhania-tarialadze}
\[
\langle y^*,R_\nu x^*\rangle_{X^*,X}=\int_X\langle x^*,h\rangle_{X^*,X}\langle y^*,h\rangle_{X^*,X}\,d\nu(h)=K_\nu(x^*,y^*).
\]
We define the associated  Reproducing Kernel Hilbert Space (RKHS) $X_\nu$ (equivalently identified with the Hilbert space associated with $R_\nu$ via $R_\nu x^* \longleftrightarrow K_\nu\left(x^*, \cdot\right)$)  as the completion of $R_\nu(X^*)$ under the scalar product $\langle R_\nu y^*,R_\nu x^*\rangle_{X_\nu}:=\langle y^*,R_\nu x^*\rangle_{X^*,X}=K_\nu(x^*,y^*)$; the reproducing identity holds $\langle z,R_\nu x^*\rangle_{X_\nu}=\langle z,K_\nu(x^*,\cdot)\rangle_{X_\nu}=\langle x^*,z\rangle_{X^*,X}$, for all $z\in X_\nu,x^*\in X^*$; we denote by
$\iota_\nu:X_\nu\hookrightarrow X$ the canonical embedding (see \cite[Section III.1.2-3]{vakhania-tarialadze}). Then
$R_\nu=\iota_\nu\iota_\nu^*$. Since $X$ is separable, $X_\nu$ is separable; let {us choose} an
orthonormal basis $\{e_n^\nu\}$. The product measure $\mu:=\otimes_{j=1}^i\nu$ satisfies Assumption~\ref{ass:moments} with $p=2$. Then, denoting the restriction of $L\in \mathcal L^i(X,Y)$ to $X^i_\nu$ by  $L_\nu :=L\circ(\iota_\nu,\ldots,\iota_\nu)$, we have
\begin{align}\label{eq:norm2=HS}
    \|L\|_{L^2_{\otimes_{j=1}^i\nu}(X^i,Y)}=\left\| L_\nu\right\|_{\mathcal L_2^i(X_\nu,Y)},\quad \forall  L\in \mathcal L^i(X,Y).
\end{align} }

{Indeed, let $i=1$. For
$L\in\mathcal{L}(X;Y)$, set $\lambda=\lambda_L:=L_{\#}\nu$. Then $\lambda$ is
{a} centered finite measure on $Y$ with finite second moment. By \cite[Section III.2.1 (e)]{vakhania-tarialadze}, we have $R_\lambda=LR_\nu L^*=L\iota_\nu(L\iota_\nu)^*$; by \cite[Section III.2.3, Remark]{vakhania-tarialadze} (using Riesz identification $Y^*\sim Y$), 
\begin{align*}
 \|L\|_{L^2_\nu(X,Y)}^2=\int_X| Lh|_Y^2\,d\nu(h)
 &=\int_Y| y|_Y^2\,d\lambda(y)
 =\operatorname{Tr}_Y(R_\lambda)=\operatorname{Tr}_Y\!\bigl((L\iota_\nu)(L\iota_\nu)^*\bigr) =\lVert (L_\nu)^*\rVert_{\mathcal{L}_2(Y,X_\nu)}^2
 =\lVert L_\nu\rVert_{\mathcal{L}_2(X_\nu,Y)}^2.
\end{align*}
For $i\geq2$, assume that the assertion holds for $i-1$; define
$A:X\to \mathcal L_2^{i-1}(X_\nu,Y)$, 
$
A(h_1)(h_2,\ldots,h_i)
:=L(h_1,\iota_\nu h_2,\ldots,\iota_\nu h_i).
$
By the induction hypothesis,
\[
\begin{aligned}
\|A(h_1)\|_{\mathcal L_2^{i-1}(X_\nu,Y)}^2
&=\int_{X^{i-1}}
   |L(h_1,h_2,\ldots,h_i)|_Y^2\,
   d\nu(h_2)\cdots d\nu(h_i)\leq
\|L\|_{\mathcal L^i(X,Y)}^2 |h_1|_X^2
\left(\int_X |h|_X^2\,d\nu(h)\right)^{i-1}.
\end{aligned}
\]
Thus $A$ is well defined and bounded. Moreover, under the
isometric identification
$
\mathcal L_2\bigl(X_\nu,\mathcal L_2^{i-1}(X_\nu,Y)\bigr)=\mathcal L_2^i(X_\nu,Y),
$
we have $\left(A \iota_\nu\right)\left(h_1\right)\left(h_2, \ldots, h_i\right)=L_\nu\left(h_1, \ldots, h_i\right)$. Thus, applying the
case $i=1$ to $A$, with Hilbert target
$\mathcal L_2^{i-1}(X_\nu,Y)$, and the induction hypothesis, we have
\[
\begin{aligned}
\|L\|_{L^2_{\otimes_{j=1}^i\nu}(X^i,Y)}^2&=\int_{X}\int_{X^{i-1}}|L(h_1,\ldots,h_i)|_Y^2\,
   d\nu(h_1)d\nu(h_2)\cdots d\nu(h_i)\\
   &=\int_X
   \|A(h_1)\|_{\mathcal L_2^{i-1}(X_\nu,Y)}^2\,d\nu(h_1)=\|A\iota_\nu\|_{
   \mathcal L_2(X_\nu,\mathcal L_2^{i-1}(X_\nu,Y))}^2=\|L_\nu\|_{\mathcal L_2^i(X_\nu,Y)}^2.
\end{aligned}
\]}
\end{example}
The covariance trace identity above relies on the Hilbert structure of $Y$ \cite[Section~III.2.3, Remark]{vakhania-tarialadze}. For general Banach targets and centered Gaussian Radon measures, an analogous identity holds in terms of iterated $\gamma$-radonifying norms, as shown in the following example.
\begin{example}[Gaussian measures: $\gamma$-radonifying operator norms]\label{ex:gaussian_measures_radonifying}Let $X$, $Y$ be Banach spaces, and let $\gamma$ be a centered Gaussian
Radon measure on $X$ (for the definition, see \cite{bogachev1998gaussian}). Its RKHS $X_\gamma$ coincides with the
Cameron--Martin space (therefore,  we will use the same notation $X_\gamma$ to indicate both) and is separable. The product measure $\mu:=\otimes_{j=1}^i\gamma$ satisfies Assumption~\ref{ass:moments} with $p=2$. Then, for all $L\in\mathcal L^i(X;Y)$, we have $L_\gamma\in\gamma^i(X_\gamma,Y)$ {(recall the iterated spaces $\gamma^i(H,Y)=\gamma(H,\gamma^{i-1}(H,Y))$ from {Appendix~\ref{app:basics}})} with\footnote{If $Y$ is Hilbert, then
$\gamma^i(X_\gamma,Y)=\mathcal L_2^i(X_\gamma,Y)$ isometrically, so this
reduces to~\eqref{eq:norm2=HS}}
\[
  \|L\|_{L^2_{\otimes_{j=1}^i\gamma}(X^i,Y)}
  =\|L_\gamma\|_{\gamma^i(X_\gamma,Y)}.
\]

Indeed, let  $i=1$; define the  
 Gaussian measure $\lambda:=L_\#\gamma$ on $Y$.  Its covariance is
$R_\lambda=LR_\gamma L^*=TT^*$, where we have used that
$R_\gamma=\iota_\gamma\iota_\gamma^*$ and we have set $T:=L\iota_\gamma$. Since $\iota_\gamma \in \gamma(X_\gamma,X)$ (see \cite[Proposition 8.6]{van-neerven2010}), by \cite[Theorem 6.2]{van-neerven2010}, we have $T\in\gamma(X_\gamma,Y)$.
Thus, by  \cite[Theorem~7.4]{van-neerven2010}, we have 
\[
  \|T\|_{\gamma(X_\gamma,Y)}^2
  =\int_Y|y|_Y^2d\lambda(y)
  =\int_X|Lh|_Y^2d\gamma(h)
  =\|L\|_{L^2_\gamma(X,Y)}^2.
\]
For $i\geq2$, we proceed by induction similarly to Example \ref{ex:HS_norms_RKHS} to prove the claim.
\end{example}

\begin{theorem}\label{thm:completion}
Let $i\geq1$, and consider one of the following settings:
\begin{enumerate}[(i)]
\item $X,Y,\nu$ are as in Example~\ref{ex:HS_norms_RKHS}, $\mu:=\otimes_{j=1}^i\nu$, $H:=X_\nu$, $\iota:=\iota_\nu$, and $\mathcal S:=\mathcal L_2^i(X_\nu,Y)$;
\item $X,Y,\gamma$ are as in Example~\ref{ex:gaussian_measures_radonifying}, $\mu:=\otimes_{j=1}^i\gamma$, $H:=X_\gamma$, $\iota:=\iota_\gamma$, and $\mathcal S:=\gamma^i(X_\gamma,Y)$.
\end{enumerate}
In both settings, Assumption~\ref{ass:moments} holds with $p=2$.
Let $\mathcal N:=\{L\in\mathcal L^i(X,Y):\|L\|_{L^2_\mu(X^i,Y)}=0\}$. Then:
\begin{enumerate}[label=(\alph*)]
\item the map $L\mapsto L\circ(\iota,\ldots,\iota)$ induces an isometric isomorphism from the completion of $\big(\mathcal L^i(X,Y)/\mathcal N,\|\cdot\|_{L^2_\mu(X^i,Y)}\big)$ onto $\mathcal S$;
\item if $Y\neq\{0\}$, then $\mathcal N=\{0\}$ if and only if $\iota(H)$ is dense in $X$.
\end{enumerate}
\end{theorem}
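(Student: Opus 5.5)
The plan for (a) is the standard ``isometric dense embedding into a complete space'' argument. Let $J:\mathcal L^i(X,Y)\to\mathcal S$, $J(L):=L\circ(\iota,\ldots,\iota)$. By Example~\ref{ex:HS_norms_RKHS}, respectively Example~\ref{ex:gaussian_measures_radonifying}, $J$ is well defined with values in $\mathcal S$ and satisfies $\|J(L)\|_{\mathcal S}=\|L\|_{L^2_\mu(X^i,Y)}$. Hence $\ker J=\mathcal N$, and $J$ descends to a linear isometry $\tilde J:\mathcal L^i(X,Y)/\mathcal N\to\mathcal S$. Since $\mathcal S$ is complete ($\mathcal L_2^i(H,Y)$ is a Hilbert space, and $\gamma^i(H,Y)$ is a Banach space by the iterated construction of Appendix~\ref{app:basics}), $\tilde J$ extends uniquely to an isometry from the completion into $\mathcal S$. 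Its range is closed, because it is the isometric image of a complete space. So it only remains to show that $J(\mathcal L^i(X,Y))$ is dense in $\mathcal S$.

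I expect density to be the main step. First, elementary tensors $(h_1,\ldots,h_i)\mapsto\prod_t\langle h_t,g_t\rangle_H\,y$ with $g_t\in H$ and $y\in Y$ span a dense subspace of $\mathcal S$. For $\mathcal L_2^i$ this follows by expanding in the orthonormal basis $\{e_n^\nu\}$, where finite-rank truncations converge in Hilbert--Schmidt norm. For $\gamma^i$ it follows because finite-rank operators are dense in $\gamma(H,E)$ by definition, and one inducts on the iterated structure $\gamma(H,\gamma^{i-1}(H,Y))$, using that $\gamma^{i-1}(H,Y)$ is the closure of the corresponding finite-rank tensors. Next, each $g\in H$ is an $H$-limit of elements $R x^*=\iota\iota^*x^*$, since $R(X^*)$ is dense in $H$ by construction. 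By the reproducing identity, $\langle h,\iota^*x^*\rangle_H=\langle x^*,\iota h\rangle$. Therefore $\prod_t\langle h_t,\iota^*x_t^*\rangle_H\,y=J\big(\prod_t x_t^*(\cdot)\,y\big)$, and the map $(g_1,\ldots,g_i)\mapsto\prod_t\langle\cdot,g_t\rangle\,y$ is continuous from $H^i$ into $\mathcal S$, since its norm is $\prod_t|g_t|_H\,|y|_Y$ in both settings. Combining these two facts shows that $J(\mathcal L^i(X,Y))$ is dense. This proves (a).

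For (b), suppose first that $\iota(H)$ is dense and $\|L\|_{L^2_\mu}=0$. Then $L\circ(\iota,\ldots,\iota)=0$, and $L$ vanishes on the dense set $\iota(H)^i$. By continuity $L=0$.

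Conversely, suppose $\iota(H)$ is not dense. By Hahn--Banach there is $x^*\neq0$ vanishing on $\overline{\iota(H)}$; note that $\iota(H)$ is a linear subspace. Fix $y_0\neq0$ and set $L:=\prod_t x^*(h_t)\,y_0$, as in the proof of Proposition~\ref{prop:Lp_norm_separable}. Then $L\neq0$ but $J(L)=0$, so $\|L\|_{L^2_\mu}=0$ by the isometry, and therefore $\mathcal N\neq\{0\}$.

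The delicate points I anticipate are the density of finite-rank tensors in the iterated $\gamma^i$ space and continuity of the elementary-tensor map there. Both should follow from $\|\,h\mapsto\langle h,g\rangle E\,\|_{\gamma(H,F)}=|g|_H\,\|E\|_F$ applied inductively.
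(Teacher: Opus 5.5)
Your proposal is correct and follows essentially the same route as the paper. The Examples' isometry gives $\ker J=\mathcal N$ and an isometric extension to the completion. Density of $J(\mathcal L^i(X,Y))$ in $\mathcal S$ comes from $J(x_1^*\otimes\cdots\otimes x_i^*\otimes y)=\iota^*x_1^*\otimes\cdots\otimes\iota^*x_i^*\otimes y$, together with density of $\iota^*(X^*)$ in $H$, the cross-norm identity, and density of elementary tensors in $\mathcal S$ by induction on the iterated structure; part (b) is the same Hahn--Banach argument with $x^*\otimes\cdots\otimes x^*\otimes y_0$. The only cosmetic difference is that for $\mathcal L_2^i$ you obtain density of elementary tensors by truncating the expansion in the product orthonormal basis, rather than by the paper's induction through $\mathcal L_2\bigl(H,\mathcal L_2^{i-1}(H,Y)\bigr)$.
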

\begin{proof}
(a) By Examples~\ref{ex:HS_norms_RKHS} and~\ref{ex:gaussian_measures_radonifying}, $\Phi:\mathcal L^i(X,Y)\to \mathcal S$, $\Phi(L):=L\circ(\iota,\ldots,\iota)$, is a linear map satisfying
\begin{equation}\label{eq:Phi_isometry}
\|\Phi(L)\|_{\mathcal S}=\|L\|_{L^2_\mu(X^i,Y)},\qquad L\in\mathcal L^i(X,Y).
\end{equation}
Hence $\ker\Phi=\mathcal N$ and, with $E:=\big(\mathcal L^i(X,Y)/\mathcal N,\|\cdot\|_{L^2_\mu(X^i,Y)}\big)$, the map $\tilde\Phi\colon E\to\mathcal S$, $\tilde\Phi([L]):=\Phi(L)$, is a well-defined linear isometry.  Since $\mathcal S$ is complete, $\tilde\Phi$ extends to a linear isometry  $\widehat\Phi\colon\widehat E\to\mathcal S$ on the completion $\widehat E$, whose range is
\[
\widehat\Phi(\widehat E)=\overline{\Phi(\mathcal L^i(X,Y))}^{\,\mathcal S}\subset\mathcal S.
\]

Therefore, it remains to show that $\overline{\Phi(\mathcal L^i(X,Y))}^{\,\mathcal S}\supset\mathcal S.$ To this end, let $\mathcal T:=\operatorname{span}\{h_1\otimes\cdots\otimes h_i\otimes y:\ h_1,\ldots,h_i\in H,\ y\in Y\}$ (recall Appendix~\ref{app:basics}). For $x_1^*,\ldots,x_i^*\in X^*$ and $y\in Y$, we have $\Phi(x_1^*\otimes\cdots\otimes x_i^*\otimes y)=\iota^*x_1^*\otimes\cdots\otimes\iota^*x_i^*\otimes y$; i.e., $\Phi(\mathcal L^i(X,Y))$ contains all $h_1\otimes\cdots\otimes h_i\otimes y$ with $h_1,\ldots,h_i\in\iota^*(X^*)$ and $y\in Y$. Moreover, by \eqref{eq:cross_norm} (see below) and Proposition~\ref{lemma:multilinear}, the multilinear map $(h_1,\ldots,h_i)\mapsto h_1\otimes\cdots\otimes h_i\otimes y$ is continuous from $H^i$ into $\mathcal S$. Since $\iota^*(X^*)$ is dense\footnote{Indeed, $\iota$ maps $\iota^*(X^*)$ onto $R(X^*)$, where $R:=\iota\iota^*$ is the covariance operator $R_\nu$, resp.\ $R_\gamma$, and $H$ is, by construction, the completion of $R(X^*)$; see Examples~\ref{ex:HS_norms_RKHS} and~\ref{ex:gaussian_measures_radonifying}.} in $H$, it follows that $h_1\otimes\cdots\otimes h_i\otimes y\in\overline{\Phi(\mathcal L^i(X,Y))}^{\,\mathcal S}\supset\mathcal T$ for all $h_1,\ldots,h_i\in H$ and $y\in Y$. Thus, since $\mathcal T$ is dense in $\mathcal S$ (see below), we have
\[
\overline{\Phi(\mathcal L^i(X,Y))}^{\,\mathcal S}\supset\overline{\mathcal T}^{\,\mathcal S}=\mathcal S,
\]
so that $\widehat\Phi\colon\widehat E\to\mathcal S$ is an isometric isomorphism.

It remains to verify the two facts used above, namely that $\mathcal T$ is dense in $\mathcal S$ and that
\begin{equation}\label{eq:cross_norm}
\|h_1\otimes\cdots\otimes h_i\otimes y\|_{\mathcal S}=\prod_{t=1}^i|h_t|_H\,|y|_Y.
\end{equation}
In setting (i), write $\mathcal T_i:=\mathcal T$ and argue by induction on $i$, with $\mathcal L_2^0(H,Y):=Y$ and $\mathcal T_0:=Y$. Set $Z:=\mathcal L_2^{i-1}(H,Y)$, so that $\mathcal L_2^i(H,Y)=\mathcal L_2(H,Z)$ (see Appendix~\ref{app:basics}). For $h\in H$ and $z\in Z$, choosing an orthonormal basis $\{e_n\}$ of $H$, Parseval's identity gives
\begin{equation}\label{eq:rank_one_HS}
\|h\otimes z\|_{\mathcal L_2(H,Z)}^2=\sum_n\|(h\otimes z)e_n\|_Z^2=\sum_n|\langle e_n,h\rangle_H|^2\,\|z\|_Z^2=|h|_H^2\,\|z\|_Z^2.
\end{equation}
For $i=1$, i.e.\ $Z=Y$, this is \eqref{eq:cross_norm}; for $i\geq2$, applying \eqref{eq:rank_one_HS} with $h=h_1$ and $z=h_2\otimes\cdots\otimes h_i\otimes y\in Z$, and using \eqref{eq:cross_norm} for $i-1$, we obtain \eqref{eq:cross_norm} for $i$:
\[
\|h_1\otimes\cdots\otimes h_i\otimes y\|_{\mathcal L_2^i(H,Y)}=\|h_1\otimes(h_2\otimes\cdots\otimes h_i\otimes y)\|_{\mathcal L_2(H,Z)}=|h_1|_H\,\|h_2\otimes\cdots\otimes h_i\otimes y\|_{Z}=\prod_{t=1}^i|h_t|_H\,|y|_Y.
\]
Similarly, for $i=1$, $\mathcal T_1=H\otimes Y$ is the space of finite-rank operators from $H$ to $Y$, which is dense in $\mathcal L_2(H,Y)$. Next, let $i\geq2$ and assume that $\mathcal T_{i-1}$ is dense in $Z$. Given $A\in\mathcal L_2(H,Z)$ and $\varepsilon>0$, since the space of finite-rank operators $H\otimes Z$ is dense in $\mathcal L_2(H,Z)$, we can choose $\sum_{k=1}^m h_k\otimes z_k\in H\otimes Z$ with $\|A-\sum_{k=1}^m h_k\otimes z_k\|_{\mathcal L_2(H,Z)}<\varepsilon$ and, by the induction hypothesis, $z_k'\in\mathcal T_{i-1}$ with $|h_k|_H\|z_k-z_k'\|_Z<\varepsilon/m$. Then $\sum_{k=1}^m h_k\otimes z_k'\in H\otimes\mathcal T_{i-1}\subset\mathcal T_i$ and, by the triangle inequality and \eqref{eq:rank_one_HS},
\begin{align*}
\Big\|A-\sum_{k=1}^m h_k\otimes z_k'\Big\|_{\mathcal L_2(H,Z)}
&\leq\Big\|A-\sum_{k=1}^m h_k\otimes z_k\Big\|_{\mathcal L_2(H,Z)}+\sum_{k=1}^m\big\|h_k\otimes(z_k-z_k')\big\|_{\mathcal L_2(H,Z)}\\
&<\varepsilon+\sum_{k=1}^m|h_k|_H\|z_k-z_k'\|_Z<2\varepsilon,
\end{align*}%
which proves the density of $\mathcal T_i$ in $\mathcal L_2^i(H,Y)$.

In setting (ii), the same argument applies with $\mathcal L_2$ replaced by $\gamma$ (and $\gamma^0(H,Y):=Y$), since $\gamma^i(H,Y)=\gamma(H,\gamma^{i-1}(H,Y))$ (see Appendix~\ref{app:basics}) and since, by definition of $\gamma(H,Z)$, $H\otimes Z$ is dense in $\gamma(H,Z)$ and $\|h\otimes z\|_{\gamma(H,Z)}=|h|_H\|z\|_Z$ for $h\in H$ and $z\in Z$.

(b) Set $V:=\overline{\iota(H)}$. By \eqref{eq:Phi_isometry} and the injectivity of $\mathcal S\hookrightarrow\mathcal L^i(H,Y)$ (recall Appendix~\ref{app:basics}), $L\in\mathcal N$ if and only if $L$ vanishes on $\iota(H)^i$, or, by continuity, on $V^i$. If $V=X$, then $\mathcal N=\{0\}$. If $V\neq X$ and $Y\neq\{0\}$, choose $x^*\in X^*\setminus\{0\}$ vanishing on $V$ by the Hahn--Banach theorem, and $y_0\in Y\setminus\{0\}$. Then $x^*\otimes\cdots\otimes x^*\otimes y_0$ is a nonzero element of $\mathcal N$.
\end{proof}

\begin{remark}[Other finite exponents]\label{rem:gaussian_finite_p}
In the Gaussian setting of Theorem~\ref{thm:completion}(ii), the seminorms $\|\cdot\|_{L^p_\mu(X^i,Y)}$, $1\leq p<\infty$, are equivalent, with constants depending only on $i$ and the exponents \cite[Proposition~3.1]{maas2010} (see also \cite[Proposition~2.7(2)]{van-neerven2010}).  For every $1\leq p<\infty$, the restriction map in Theorem~\ref{thm:completion}(a) therefore extends to an isomorphism from the $L^p_\mu$-completion of $\mathcal L^i(X,Y)/\mathcal N$ onto $\gamma^i(X_\gamma,Y)$, with equivalent norms.
\end{remark}

\begin{remark}\label{rem:completion_unbounded}
In the setting of Theorem~\ref{thm:completion}, assume that $\mathcal N=\{0\}$ and that $X$ is infinite dimensional. By Proposition~\ref{prop:finite-dim}(ii), $\mathcal S$ contains elements $T$ whose pullbacks $T\circ(\iota^{-1},\ldots,\iota^{-1})$ are unbounded with respect to $|\cdot|_X$ and therefore admit no bounded multilinear extension to $X^i$. For instance, for centered Gaussian $\gamma$, $i=1$ and $Y=\mathbb R$, this corresponds to the classical fact that $\overline{X^*}^{\,L^2(X;\gamma)}$, identified with $(X_\gamma)^*$, consists  of (equivalence classes of) $\gamma$-measurable linear functionals, which need not admit continuous representatives on $X$ \cite[Theorems~2.10.9, 2.10.11]{bogachev1998gaussian}.
\end{remark}

\paragraph{Relation to \cite{defeo2026universal}.} 
A preliminary treatment of this material appeared in the appendices of versions v1--v4 of \cite{defeo2026universal}. However, space constraints limited the scope of the treatment. The author subsequently decided to develop the present article to  provide a systematic treatment.

\paragraph{AI disclosure.}In developing the preliminary material from \cite{defeo2026universal} into the present article, the author used ChatGPT (OpenAI) and Claude (Anthropic) to assist in preparing the manuscript, including developing mathematical arguments.
 The author takes full responsibility for the content of this article.

 \paragraph{Acknowledgments.} 
The author acknowledges funding by the Deutsche Forschungsgemeinschaft (DFG, German Research Foundation) – CRC/TRR 388 ``Rough Analysis, Stochastic Dynamics and Related Fields'' – Project ID 516748464.

\appendix
\section{Notation}\label{app:basics}
 \paragraph{Normed spaces.} Let $(X,|\cdot|_X)$ be a normed space. When there is no chance of confusion we simply write $|\cdot|$ for the norm. {We denote the sphere $S^{X,R}=\{x\in X:|x|_X= R\}, R>0$.}
For $k\in \mathbb N$, denote by $X^k$ the product space endowed with the norm $\left|\left(x_1, \ldots, x_k\right)\right|_{X^k}:=\left(\left|x_1\right|^2+\ldots+\left|x_k\right|^2\right)^{1 / 2}$. For a subset $A$ of a normed space $Z$, we denote by $\overline{A}^{\,Z}$, or simply by $\overline{A}$ when $Z$ is clear from the context, the closure of $A$ in $Z$, and, for a normed space $E$, we denote by $\widehat E$ its completion.

Let $(Y,|\cdot|_Y)$ be another normed space.
We denote by $\mathcal L(X,Y)$ the normed space of linear bounded  operators from $X$ to $Y$, endowed with the operator norm $\|L\|_{\mathcal L(X,Y)} :=\sup_{|x|_X=1}|Lx|_Y , $ $  L  \in \mathcal L(X,Y).$
We denote by $ X^*=\mathcal L(X,\mathbb R)$ the dual space, and by $\langle\cdot,\cdot\rangle_{X^*,X}$ the duality pairing.
When $Y=X$ we simply write $\mathcal{L}(X)=\mathcal{L}(X,X).$

We denote by $\mathcal L^i(X;Y)$ 
the  space of  continuous multilinear operators $T: X^i \rightarrow Y$. We recall the following standard result \cite[(A.2.5)]{Flett_1980}.
\begin{proposition}\label{lemma:multilinear}
    Let $T: X^i \rightarrow Y$ be multilinear, then the following are equivalent (i) $T$ is continuous; (ii) $T$ is continuous at one point; (iii) $\|T\|_{\mathcal L^i(X,Y)}=\sup \left\{\left|T\left(h_1, \ldots, h_i\right)\right |_Y:\left|h_1\right|_X = 1, \ldots,\left|h_i\right|_X = 1\right\}<\infty$; (iv) $\mathcal C=\left\{C \geq 0:\left|T\left(h_1, \ldots, h_i\right)\right|_Y \leq C \ \Pi_{j=1}^i|h_j|_X, \  \forall h_1, \ldots, h_i \in X \right\} \neq \varnothing$.
If one (hence, all) of these conditions {holds}, then $\|T\|_{\mathcal L^i(X,Y)} \in \mathcal C$ and $\|T\|_{\mathcal L^i(X,Y)}=\inf \mathcal C$. 
\end{proposition}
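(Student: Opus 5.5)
We prove the cycle (i)$\Rightarrow$(ii)$\Rightarrow$(iv)$\Rightarrow$(i), then the equivalence (iii)$\Leftrightarrow$(iv) together with the identity $\|T\|_{\mathcal L^i(X,Y)}=\inf\mathcal C$. The implication (i)$\Rightarrow$(ii) is trivial.

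For (iv)$\Rightarrow$(i), fix $C\in\mathcal C$ and use the telescoping identity
\[
T(x_1,\ldots,x_i)-T(y_1,\ldots,y_i)=\sum_{j=1}^i T(y_1,\ldots,y_{j-1},x_j-y_j,x_{j+1},\ldots,x_i).
\]
It gives $|T(x)-T(y)|_Y\le C\sum_j |x_j-y_j|_X\prod_{t\ne j}\max(|x_t|_X,|y_t|_X)$. The products stay bounded for $x$ in a bounded neighbourhood of $y$, so $T$ is continuous at every point.

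The main step is (ii)$\Rightarrow$(iv). Suppose $T$ is continuous at $a=(a_1,\ldots,a_i)$. Choose $\delta>0$ such that $|T(a+k)-T(a)|_Y\le1$ whenever $|k|_{X^i}<\delta$. For $h=(h_1,\ldots,h_i)$ and $S\subset\{1,\ldots,i\}$, let $h_S$ be the tuple with entry $h_j$ for $j\in S$ and $0$ otherwise.

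Expanding $T(a+h_S)$ by multilinearity and applying inclusion--exclusion gives
\[
T(h_1,\ldots,h_i)=\sum_{S\subset\{1,\ldots,i\}}(-1)^{i-|S|}\,T(a+h_S).
\]
Since $\sum_S(-1)^{i-|S|}=0$ for $i\ge1$, we may subtract $T(a)$ from each term. If each $|h_j|_X<\delta/\sqrt i$, then $|h_S|_{X^i}<\delta$, and hence $|T(h)|_Y\le 2^i$.

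For general nonzero $h_j$, rescale to $h_j':=\frac{\delta}{2\sqrt i}\,h_j/|h_j|_X$ and use homogeneity in each variable. This yields $|T(h)|_Y\le 2^i(2\sqrt i/\delta)^i\prod_j|h_j|_X$. If some $h_j=0$, both sides vanish by linearity in that slot. So $\mathcal C\neq\varnothing$. This bookkeeping is the only delicate part; the inclusion--exclusion identity is what removes the dependence on $a$.

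Finally, consider (iii)$\Leftrightarrow$(iv) and the formula for the norm. If $C\in\mathcal C$, then evaluating on unit vectors gives $\|T\|_{\mathcal L^i(X,Y)}\le C$; in particular (iii) holds, and $\|T\|_{\mathcal L^i(X,Y)}\le\inf\mathcal C$.

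Conversely, set $N:=\|T\|_{\mathcal L^i(X,Y)}<\infty$. For nonzero $h_j$, multilinearity gives
\[
T(h_1,\ldots,h_i)=\prod_j|h_j|_X\; T\bigl(h_1/|h_1|_X,\ldots,h_i/|h_i|_X\bigr),
\]
so $|T(h)|_Y\le N\prod_j|h_j|_X$. The case where some $h_j=0$ is trivial. Thus $N\in\mathcal C$, which proves (iv) and $\inf\mathcal C\le N$. Since $X\neq\{0\}$, the unit sphere is nonempty, so the supremum defining $N$ is meaningful. Hence $\|T\|_{\mathcal L^i(X,Y)}=\min\mathcal C=\inf\mathcal C$.
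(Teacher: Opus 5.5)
Your proof is correct and complete. The paper does not prove this proposition itself; it quotes it as a standard fact from \cite[(A.2.5)]{Flett_1980}. So your proposal is a self-contained replacement for a citation rather than an alternative to an in-paper argument.

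The one non-routine point in the statement as given is (ii) with an \emph{arbitrary} point $a$. The common textbook version instead goes through continuity at the origin. For a linear map one would simply translate, but a multilinear map does not commute with translations. Your M\"obius/inclusion--exclusion identity
$T(h)=\sum_{S}(-1)^{i-|S|}\bigl(T(a+h_S)-T(a)\bigr)$
is exactly what replaces the translation trick. For $i=2$ it is the familiar
$B(h,k)=B(a_1+h,a_2+k)-B(a_1+h,a_2)-B(a_1,a_2+k)+B(a_1,a_2)$.
It also produces the explicit constant $2^i(2\sqrt i/\delta)^i\in\mathcal C$.

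The other steps are the standard ones:
\begin{itemize}
\item the telescoping estimate for (iv)$\Rightarrow$(i);
\item the homogeneity argument giving $\|T\|_{\mathcal L^i(X,Y)}\in\mathcal C$, hence $\|T\|_{\mathcal L^i(X,Y)}=\min\mathcal C=\inf\mathcal C$.
\end{itemize}
You also correctly note that the standing assumption $X\neq\{0\}$ is what makes the supremum in (iii) range over a nonempty set.

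The citation buys the paper brevity. Your version gives a checkable proof of the fact the paper actually relies on later, namely $|T(h_1,\ldots,h_i)|_Y\le\|T\|_{\mathcal L^i(X,Y)}\prod_{t}|h_t|_X$, used for instance in \eqref{eq:normr_bounded_opnorm}.
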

The space $\mathcal L^i(X;Y)$  is a normed space when endowed with the norm $\|\cdot\|_{\mathcal L^i(X,Y)}$, and a Banach space if $Y$ is complete. 
We use the notation $\mathcal{L}^i(X)$ when $X=Y$. 

For $x_1^*,\ldots,x_i^*\in X^*$ and $y\in Y$, we define $x_1^*\otimes\cdots\otimes x_i^*\otimes y\in\mathcal L^i(X,Y)$ by $(x_1^*\otimes\cdots\otimes x_i^*\otimes y)(h_1,\ldots,h_i):=\prod_{t=1}^i\langle x_t^*,h_t\rangle_{X^*,X}\,y$; accordingly, we identify the algebraic tensor product $X^*\otimes\cdots\otimes X^*\otimes Y$ ($i$ factors $X^*$) with a subspace of $\mathcal L^i(X,Y)$.

 \paragraph{Hilbert spaces.} If   $(X,\langle \cdot ,\cdot \rangle_X)$ is a Hilbert space, we denote the induced norm by   $ |\cdot|_X:=(\langle \cdot ,\cdot \rangle_X)^{1/2}.$ If $X$ is separable   with orthonormal basis $\{e_i^X\}$, we have  
 $x=\sum_{i\in \mathbb N}\langle x,e_i^X \rangle e_i, $ $ |x|^2=\sum_{i\in \mathbb N}|\langle x,e_i^X \rangle|^2.$
Throughout the whole paper we identify each Hilbert space $H$ with its dual $H^*.$ Accordingly, for a normed space $Y$, $i\geq1$, $h_1,\ldots,h_i\in H$ and $y\in Y$, $h_1\otimes\cdots\otimes h_i\otimes y\in\mathcal L^i(H,Y)$ is the map $(k_1,\ldots,k_i)\mapsto\prod_{t=1}^i\langle k_t,h_t\rangle_H\,y$ (see Appendix~\ref{app:basics}); this identifies the algebraic tensor product $H\otimes\cdots\otimes H\otimes Y$ ($i$ factors $H$) with a subspace of $\mathcal L^i(H,Y)$ and, for $i=1$, $H\otimes Y$ with the finite-rank operators from $H$ to $Y$, $(h\otimes y)h'=\langle h',h\rangle_H\,y$. 

Let   $(Y,\langle \cdot ,\cdot \rangle_Y)$ be another  Hilbert space.
If $X,Y$ are separable, we denote by $\mathcal{L}_1(X,Y)$ the Banach space of trace-class operators endowed with the norm  
$\|L\|_{\mathcal{L}_1(X,Y)}:=\inf \left \{\sum_{i \in \mathbb N} |a_i|_X |b_i|_Y : Lx=\sum_{i \in \mathbb N} b_i  \langle a_i,x \rangle, \ a_i \in X, b_i \in  Y , \forall i \in \mathbb N \right \} ,  $ $ L \in \mathcal{L}_1(X,Y).$ 
We denote by  $\mathcal{L}_2(X,Y)$ the Hilbert space of Hilbert-Schmidt operators from $X$ to $Y$. The scalar product in $\mathcal{L}_2(X,Y)$ and its induced norm are respectively given by
$\langle L,T \rangle_{\mathcal{L}_2(X,Y)}:= \sum_{k \in \mathbb N} \langle Le_k,Te_k \rangle_Y, $ $   \|L\|_{\mathcal{L}_2(X,Y)}:=\left (\sum_{k \in \mathbb N} |L  e_k|_Y^2 \right)^{1/2} $ $  L,T \in \mathcal{L}_2(X,Y),$
where  $\{ e_k\}$  is any orthonormal basis of $X$. We will consider the space of $i$-linear Hilbert-Schmidt operators $\mathcal L_2^i\left(X,Y\right)$, with inner product 
$
\langle L, T\rangle_{\mathcal L_2^i\left(X, Y\right)}=\sum_{j_1, \ldots, j_i\in \mathbb N}\left\langle L\left(e_{j_1}, \ldots, e_{j_i}\right), T\left(e_{j_1}, \ldots, e_{j_i}\right)\right\rangle_{Y},
$
and induced Hilbert-Schmidt norm $\|\cdot\|_{\mathcal{L}_2^i\left(X, Y\right)}$. The canonical isometric identification gives $
\mathcal L_2\bigl(X,\mathcal L_2^{i-1}(X,Y)\bigr)=\mathcal L_2^i(X,Y).
$
  When $Y=X$ we simply write $\mathcal{L}_1(X)=\mathcal{L}_1(X,X)$, $\mathcal{L}_2(X)=\mathcal{L}_2(X,X)$, $\mathcal{L}^i_2(X)=\mathcal{L}_2^i(X,X)$.
  We denote by $\mathcal{L}^+_1(X)\subset \mathcal{L}_1(X)$ the {cone} of positive operators. If $T\in \mathcal{L}_1(X)$ we can define its trace by  $\operatorname{Tr}(T):=\sum_{k \in \mathbb{N}}\left\langle T e_k, e_k\right\rangle_X$, where $\operatorname{Tr}(T)$ is independent of the   orthonormal basis $\{ e_k\}$.

\paragraph{Borel measures.} Let $X,Y$ be normed spaces. We say that a (positive) Borel measure $\mu$ on $X$ is finite, or, simply, $\mu$ is a finite measure on $X$ (dropping Borel for brevity), if $\mu(X)<\infty$; if $\mu(X)=1$ it is called  a Borel probability measure. {A measure $\mu$ on $X$ is called Radon if it is locally finite and inner regular, i.e.\ $\mu(B)=\sup\{\mu(C):C\subset B,\ C\text{ compact}\}$ for every $B\in\mathcal B(X)$; if $\mu$ is finite, this is equivalent to tightness,  i.e. for every $\epsilon>0$ there exists a compact set $C\subset X$ such that $\mu(X\setminus C)<\epsilon$. Every finite measure on a separable Banach space is Radon.}

 Let $T:X\to Y$ be Borel measurable. We define the pushforward measure $T_\#\mu$ on $Y$ by
$
(T_\#\mu)(A):=\mu\big(T^{-1}(A)\big),$ $ A\in \mathcal B(Y).$
Then, for every measurable function $\varphi:Y\to\mathbb R$,
$
\int_Y \varphi(y)\,d(T_\#\mu)(y)
=
\int_X \varphi\big(T(x)\big)\,d\mu(x).$ We define $\operatorname{supp}(\mu):=\left\{x \in X : \forall N_x :\left(x \in N_x \Rightarrow \mu\left(N_x\right)>0\right)\right\}$, where $N_x$ denotes a neighborhood of $x\in X.$

Given finite Radon measures $\nu_1,\ldots,\nu_i$ on a separable normed space $X$, we denote by $\nu_1\otimes\cdots\otimes\nu_i$ their product measure on $X^i$, which is defined on $\mathcal B(X)^{\otimes i}=\mathcal B(X^i)$, and we set $\otimes_{j=1}^i\nu:=\nu\otimes\cdots\otimes\nu$ ($i$ factors).

 \paragraph{$\gamma$-radonifying operators.}
{Let $H$ be a Hilbert space and $Y$ a Banach space. If $h_1,\ldots,h_N$ are orthonormal in $H$,
$y_1,\ldots,y_N\in Y$, and $(g_n)$ are independent standard Gaussian
variables, set
$\left\|\sum_{n=1}^N h_n\otimes y_n\right\|_{\gamma(H,Y)}^2
  :=\E\left|\sum_{n=1}^N g_ny_n\right|_Y^2.$
This norm is independent of the orthonormal representation. Following
\cite[Definition~3.7]{van-neerven2010}, $\gamma(H,Y)$ is the completion
of $H\otimes Y$ in this norm. The canonical inclusion into $\mathcal L(H,Y)$
extends to an injective contraction $\gamma(H,Y)\hookrightarrow\mathcal L(H,Y)$;
after this identification, $T\in\mathcal L(H,Y)$ is called $\gamma$-radonifying
if $T\in\gamma(H,Y)$ \cite[Definition~3.13]{van-neerven2010}.
For $i\geq1$, define the iterated spaces by
$\gamma^1(H,Y):=\gamma(H,Y)$ and, recursively,
$\gamma^i(H,Y)
  :=\gamma\bigl(H,\gamma^{i-1}(H,Y)\bigr),
 i\geq2.$ If $H,Y$ are {separable Hilbert spaces}, we have $\mathcal L_2^i(H,Y)=\gamma^i(H,Y)$.}
Recursively, we regard $\gamma^i(H,Y)$ as a subspace of $\mathcal L^i(H,Y)$ by $\gamma(H,\gamma^{i-1}(H,Y))\hookrightarrow\mathcal L(H,\gamma^{i-1}(H,Y))\hookrightarrow\mathcal L(H,\mathcal L^{i-1}(H,Y))=\mathcal L^i(H,Y)$.

\bibliography{refs}
\bibliographystyle{plain}
\end{document}